\definecolor{Bnavy}{RGB}{0, 66, 128}
\definecolor{Bdust}{RGB}{140,179,217}
\definecolor{Bsugarpaper}{RGB}{198, 217, 236}
\definecolor{Bgreen}{RGB}{142, 183, 114}
\definecolor{Blimegreen}{RGB}{202, 222, 189}
\definecolor{Bgreentheme}{RGB}{36, 87, 1}
\theoremstyle{plain}
\newtheorem{theorem}{Theorem}[section]
\newtheorem{lemma}[theorem]{Lemma}
\newtheorem{corollary}[theorem]{Corollary}
\newtheorem{proposition}[theorem]{Proposition}
\newtheorem{remark}[theorem]{Remark}
\newtheorem{notation}[theorem]{Notation}
\theoremstyle{definition}
\newtheorem{definition}[theorem]{Definition}
\newtheorem{example}[theorem]{Example}
\newenvironment{examplet}
    {\begin{example}
    }
    { 
    $\hfill \triangleleft$
    \end{example} 
    }
\newcommand{\Vect}{\mathrm{Vect}}
\newcommand{\A}{\mathcal{A}}
\newcommand{\C}{\mathcal{C}}
\newcommand{\Pos}{\mathcal{P}}
\newcommand{\fin}{\mathrm{fin}}
\newcommand{\utilde}[1]{\underaccent{\tilde}{#1}}
\newcommand{\define}[1]{{\bf \boldmath{#1}}}
\newcommand{\Field}{\mathbb{F}}
\newcommandx{\permodx}[2][1= \Pos, 2=\mathfrak X]{\mathrm{PerM}_{#2}(#1)}
\newcommand{\permodq}[1]{\mathrm{PerM}(#1)}
\newcommand{\eps}{\varepsilon}
 \newlist{ClosProps}{enumerate}{4}
 \setlist[ClosProps]{label*=(\roman*)}
    \crefname{ClosPropsi}{Property}{Properties}
    \Crefname{ClosPropsi}{Property}{Properties}
\title{Notes on abelianity of \\ categories of finitely encodable persistence modules}
\author{Lukas Waas\footnote{Department of Mathematics, Heidelberg University, Germany, {\em email:} lwaas@mathi.uni-heidelberg.de}}
\date{}
\begin{document}
\maketitle
\begin{abstract}
    When working with (multi-parameter) persistence modules, one usually makes some type of tameness assumption in order to obtain better control over their algebraic behavior. One such notion is Ezra Millers notion of finite encodability, which roughly states that a persistence module can be obtained by pulling back a finite dimensional persistence module over a finite poset. From the perspective of homological algebra finitely encodable persistence have an inconvenient property: They do not form an abelian category. Here, we prove that if one restricts to such persistence modules which can be constructed in terms of topologically closed and sufficiently constructible (piecewise linear, semi-algebraic, etc.) upsets then abelianity can be restored.
\end{abstract}
This article was originally intended as a small set of notes hosted on my website, which I wrote in 2020 towards the beginning of my PhD (which was concerned with entirely different matters). After several people contacted me and said they would profit from a permanently accessible version on the arXiv, I decided to upload it in the current (slightly more polished) form.
\section{Introduction}
When working with (multiparameter) persistence modules, it is often necessary to make some type of tameness assumption in order to apply the methods of commutative and homological algebra, or representation theory (see for example \cite{chazal2014observable,Lesnick2015}).
In the one-parameter setting, for example, the classification theorem for persistence modules in terms of barcodes (\cite{Crawley-Boevey}) only holds under the assumption of pointwise finite dimensionality (see for ex \cite[Ex. 3.3.]{max_obs}). In \cite{Miller-hom}, the author introduced such a notion of tameness for the multiparameter setting, which he called finite encodability in a previous version of the paper\footnote{In the current version, the term \textit{tame} is used. We chose to stick with finite encodability, as it clearly distinguishes from other notion of tameness, such as finite presentability.}. Roughly speaking, a persistence module is finitely encodable if it can be obtained by pulling back a pointwise finite dimensional persistence module defined on a finite poset.
While this fairly general definition turns out to be quite powerful, it is somewhat deficient from a homological algebra point of view. Namely, the category of finitely encodable persistence modules over some fixed poset $\Pos$ is not a full abelian subcategory of the category of arbitrary persistence modules over $\Pos$ (\cite[Ex. 4.25]{Miller-hom}). For many applications, in particular to apply the language of amplitudes developed in \cite{WONG}, having the structure of an abelian category at hand is necessary. \\
Here, we show that under some slightly stronger constructability assumptions abelianity may be restored. In particular, we prove:
\begin{theorem}\label{thm:special_case_of_main_result}
    Let $\mathfrak X$ be the subset of the powerset of $\mathbb{R}^n$ generated under complement and union by the set of topologically closed upsets which are piecewise linear (semialgebraic, finitely subanalytic, or more generally obtained from some $o$-minimal structure as in \cite{van1998tame}).\\ 
    Let $\permodq{\mathbb R^n}$ be the category of all $n$-parameter persistence modules, with respect to some fixed field, and let $\permodx[\mathbb R^n]$ be the full subcategory given by such modules which are finitely encodable by an encoding map $e \colon \mathbb{R}^n \to \Pos$, which has fibers in $\mathfrak{X}$. 
    Then the inclusion \[
    \permodx[\mathbb R^n] \hookrightarrow \permodq{\mathbb R^n}\] makes $\permodx[\mathbb R^n]$ a full abelian subcategory of $\permodq{\mathbb R^n}$.
\end{theorem}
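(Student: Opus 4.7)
The plan is to verify that $\permodx[\mathbb{R}^n]$ is closed inside the abelian functor category $\permodq{\mathbb{R}^n}$ under the zero object, finite direct sums, kernels, and cokernels; once this is established, the inclusion is automatically a fully faithful exact embedding, making $\permodx[\mathbb{R}^n]$ a full abelian subcategory. A preparatory observation is that $\mathfrak{X}$ is by construction a Boolean subalgebra of $2^{\mathbb{R}^n}$, in particular closed under finite intersections and set differences. Consequently, encodings with fibers in $\mathfrak{X}$ are closed under common refinements: given two such $e_i \colon \mathbb{R}^n \to \mathcal{P}_i$, the joint map to $\mathcal{P}_1 \times \mathcal{P}_2$ is order-preserving with fibers $e_1^{-1}(p) \cap e_2^{-1}(q)$ lying in $\mathfrak{X}$. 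Closure under finite direct sums and the existence of a zero object follow immediately by taking a common encoding and forming the direct sum on the finite poset.

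The main work is closure under kernels and cokernels. Given $f \colon M \to N$ in $\permodx[\mathbb{R}^n]$, I would first fix a common encoding $e \colon \mathbb{R}^n \to \mathcal{P}$ of $M$ and $N$ with fibers in $\mathfrak{X}$, so $M = e^{*}H_M$ and $N = e^{*}H_N$. Pointwise $f$ amounts to a family of linear maps $f_x \colon H_M(e(x)) \to H_N(e(x))$, and it descends to a morphism on $\mathcal{P}$ exactly when $x \mapsto f_x$ is constant on fibers of some refinement of $e$. Since the transition maps of both $M$ and $N$ between any two comparable points in the same fiber of $e$ are identities, naturality of $f$ forces $f_x = f_y$ whenever $x$ and $y$ are joined by a zigzag of order-comparisons inside the fiber. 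The strategy is therefore to refine $e$ to an encoding $e' \colon \mathbb{R}^n \to \mathcal{P}'$ whose fibers are the poset-connected components of the original fibers. Granted this, $f$ is literally the pullback along $e'$ of a morphism $g \colon H_M' \to H_N'$ of pointwise-finite-dimensional modules on $\mathcal{P}'$; kernels and cokernels of $g$ are computed pointwise in finite-dimensional vector spaces, and pulling them back along $e'$ yields $\ker f$ and $\coker f$ as objects of $\permodx[\mathbb{R}^n]$.

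The hard part is the refinement step itself: showing that every fiber $e^{-1}(p) \in \mathfrak{X}$ admits a finite decomposition into poset-connected pieces still belonging to $\mathfrak{X}$, and that the resulting partition carries a partial order making $e'$ a legitimate order-preserving encoding. This is where the o-minimal hypothesis is essential. By cell decomposition in the chosen o-minimal structure, each fiber is a finite union of definable cells, and each cell can be further subdivided along the coordinate-monotonicity loci of its defining functions, an operation that again stays within $\mathfrak{X}$ because $\mathfrak{X}$ contains the relevant closed upsets and is Boolean-closed. The resulting pieces are finitely many, definable, and poset-connected. Order-compatibility of the refined partition then follows by construction: two distinct refined pieces within the same original fiber are, by the definition of poset-component, unrelated by any order comparison, while pieces sitting in different original fibers inherit their relative order from $\mathcal{P}$, using that $e$ was already order-preserving.
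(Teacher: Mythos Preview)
Your overall strategy coincides with the paper's: refine a common encoding so that each fiber is $\leq$-connected, descend the morphism to the finite poset, and pull back kernel and cokernel. The gap is entirely in the ``hard part'', and it is a genuine one.

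First, your appeal to o-minimal cell decomposition does not yield $\leq$-connected pieces. Cells are topologically connected, but in the product order on $\mathbb{R}^n$ a cell can be totally $\leq$-disconnected: the segment $\{(t,-t):0\le t\le 1\}$ in $\mathbb{R}^2$ is a single cell, yet any two distinct points on it are incomparable. Your suggested further subdivision ``along the coordinate-monotonicity loci of the defining functions'' is too vague to rescue this; it is not clear what these loci are, why the resulting pieces would be $\leq$-connected, or why they would lie in the Boolean algebra generated by \emph{closed} upsets (being definable is not enough, since $\mathfrak{X}$ is strictly smaller than the algebra of all definable sets). The paper's argument instead exploits the specific shape of the fibers: they are intervals in $\mathfrak{X}$, hence of the form $\overline{U}\cap\overline{V}^{\,c}$ with $\overline{U}$ a closed upset and $\overline{V}^{\,c}$ an open downset, and for such sets one proves directly that a small cube around any point meets the interval in a $\leq$-connected set. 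This local $\leq$-connectedness forces topological components and $\leq$-components to agree, and then o-minimality is used only to guarantee finitely many topological components. A separate argument is still needed (and is given in the paper) to show that each component is again of the form (closed upset)$\,\cap\,$(open downset) and hence lies in $\mathfrak{X}$.

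Second, your description of the partial order on $\mathcal{P}'$ is not quite right. Declaring that components in different fibers ``inherit their relative order from $\mathcal{P}$'' puts $I\le J$ whenever $\pi(I)\le\pi(J)$, even if no point of $I$ lies below any point of $J$. In that case the naturality square for the putative descended morphism $g$ cannot be verified from naturality of $f$, so $g$ need not be a morphism of $\mathcal{P}'$-modules. The correct order is the one \emph{generated} by the relation $I\lesssim J$ iff there exist $p\in I$, $q\in J$ with $p\le q$; one must then check antisymmetry, which follows from antisymmetry on $\mathcal{P}$ together with the fact that distinct $\leq$-components of a single fiber are order-unrelated.
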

We obtain this result by showing that under certain connectedness assumptions on allowable encoding fibers, any two encodable persistence modules admit a common encoding which also encodes all morphisms between them (\cref{P:connective_alg_finest_enc}). This result is of interest on its own, as it frequently allows one to reduce a proof in the finitely encodable setting to the framework of a finite poset (see for example the results in \cite[Sec. 4]{WONG}). 
We note that one could have taken the alternative (albeit significantly less elementary) route, of obtaining a proof of the subanalytic case of \cref{thm:special_case_of_main_result} by passing to the world of sheaf theory, using results \cite{millerConstructible,KashiwaraShapPers,Berkouk_2021} (see \cref{rem:alternative_proof}). In our case, \cref{thm:special_case_of_main_result} follows from two theorems which may be formulated purely on the level of posets, not assuming any additional geometrical structure (\cref{thm:X_conn_abelian,P:connective_alg_finest_enc}). In particular, these apply to more general (for example discrete) scenarios and are of independent interest.
\section{Preliminaries and notation}
We begin by fixing some language and notation.
Aside from this, notation from \cite{Miller-hom} will be used freely.
\subsection{Notation}
\begin{itemize}
    \item By $\Vect$, we denote the category of vector spaces with respect to some fixed field $\Field$.
    The specific field itself will be immaterial to the discussion, and is hence omitted from the notation.
    \item  Given a small category $\C$ and another category $\A$, we denote by $\A^{\C}$ the category of functors with source $\C$ and target $\A$.
    We will mostly be interested in the particular case where $\C=\Pos$ is a poset (interpreted as a category with at most one morphism in every hom-set) and $\A= \Vect$.
    \item Elements of $\Vect^\Pos$ are called \define{persistence modules over $\Pos$}.
    \item By an \define{interval} in a poset $\Pos$ we mean a set $I \subset \Pos$ with the property that $q \leq p \leq q'$ and $q,q' \in I$ implies $p \in I$. Equivalently, an interval is a set given by the intersection between a downset and an upset in $\Pos$.
    \item Given two elements $q \leq q'$ of a poset $\Pos$, we denote by $[q, q']:= \{p \in \Pos \mid q \leq p \leq q' \}$ the interval of elements lying between $q$ and $q'$. We use analogous notation for half open intervals.
    \item We consider $\mathbb{R}^n$ as a poset, by equipping it with the product poset structure derived from the linear order on $\mathbb{R}$.
    \item Given an interval $I \subset \Pos$, we denote by $\Field[I] \in \Vect^{\Pos}$ the unique persistence module with $\Field$ at every point in $I$, $0$ everywhere else and the identity as structure morphism for pairs of points $p \leq p'$ in $I$.
    \item Given a functor $F: \C \to \C'$  we denote by $F^*: \A^{\C'} \to \A^{\C}$ the functor obtained by precomposition. 
    \item When $\A$ is a category that admits all small colimits, such as $\Vect$, then $F^*$ admits a left adjoint \[F_!\colon \A^{\C} \to \A^{\C'}\] given by left Kan-extension. Explicitly it is given by
    \[ F_!G(c') = \varinjlim_{F(c) \to c'}G(c) \]
    where the colimit is taken over the comma category $F/ c'$ and functoriality is induced in the obvious fashion.
\end{itemize}
\subsection{Finitely encodable persistence modules}
To be able to perform commutative and homological algebra on $\Vect^{\Pos}$ it is often necessary to assume some type of finiteness condition. In this paper, we are specifically concerned with following notion of finiteness:
\begin{definition}\cite{Miller-hom}
A \define{finite encoding} of a persistence module $M \in \Vect^\Pos$ consists of the following data:
\begin{enumerate}
    \item a finite poset $\Pos'$;
    \item a pointwise finite dimensional persistence module $M' \in \Vect^{\Pos'}$;
    \item a map of posets $e: \Pos  \to \Pos'$;
    \item an isomorphism $\phi: M \xrightarrow{\sim} e^*M'$.
\end{enumerate}
A persistence module $M \in \Vect^\Pos$ is called \define{finitely encodable}, if it admits a finite encoding\footnote{In \cite{WONG}, these are called finitely encoded. In \cite{Miller-hom}, in addition to the notion of encodability, the term tame is used.}. 
\end{definition}
For many purposes - not the least to turn finitely encodable modules into an abelian category - it is necessary to enforce some additional control over what kind of encodings one allows for. 
\begin{definition}
Let $\Pos$ be a poset. An \define{encoding structure} $\mathfrak X$ on $\Pos$ is a subset of the powerset of $\Pos$, such that
\begin{enumerate}
    \item $\mathfrak X$ is an algebra (i.e. closed under finite unions and complements, and contains $\Pos$);
    \item If $I \in \mathfrak X$ is an interval of $\Pos$, then there exist upsets $U,V \in \mathfrak X$, such that $I = U \cap V^c$;
    \item Every element of $\mathfrak X$ is a finite union of intervals which are themselves contained in $\mathfrak X$.
\end{enumerate}
\end{definition}
\begin{examplet}\label{ex:Enc_Structures}
Consider the case $\Pos= \mathbb R^n$ equipped with the encoding structure of staircases, generated by upsets of the form 
$ [p, \infty)$
for $p\in \mathbb (\mathbb R\cup \{ - \infty \})^{n}$. Intervals of this structure are given by finite unions of generalized cubes in $\mathbb R^n$. 
We denote this encoding structure by $\mathfrak C$ and call it the staircase structure. More generally, one can consider the encoding structure given by finite unions of intervals which are piecewise linear, semialgebraic or (finitely) subanalytic, or even more generally the intervals contained in a fixed o-minimal structure on $\mathbb R$ in the sense of \cite{van1998tame}.
\end{examplet}
Encoding structures allow one to restrict the considered category to persistence modules with tamer algebraic and topological behavior. 
\begin{definition}
Given an encoding structure $\mathfrak X$ on a poset $\Pos$, we say a persistence module $M \in \Vect^{\Pos}$ is \define{$\mathfrak X$-encodable} if it admits a finite encoding such that the encoding map $e: \Pos \to \Pos'$ has the property that every fiber of $e$ is in $\mathfrak X$. Such an encoding is also called \define{of class $\mathfrak X$}. We denote by $\permodx$ the full subcategory of $\Vect^{\Pos}$ given by $\mathfrak X$-encodable $\Pos$-persistence modules.
\end{definition}
\begin{remark}
    Note that a map $e \colon \Pos \to \Pos'$ has fibers in an encoding structure $\mathfrak X$, if and only if the inverse image of every upset in $\Pos'$ lies in $\mathfrak{X}$. This is part of the general philosophy of an encoding structure being determined by its upsets. In fact, the first and the third defining axiom guarantee that $\mathfrak{X}$ is an algebra generated by upsets. While one could have taken the path of defining encoding structure only in terms of their upsets, this makes expressing the second defining axiom and some of its consequences somewhat tedious. Furthermore, considering the way that encoding structures arise in practice (see \cref{rem:prop_of_enc_struct}), it seems to be more natural to think of them in terms of an algebra.
\end{remark}
\begin{remark}\label{rem:prop_of_enc_struct}
     The second axiom of an encoding structure is not used in the main result of these notes. It guarantees that an interval lies in $\mathfrak X$, if and only if it can be written as an intersection of an upset and a downset in $\mathfrak X$. This has the consequence that an interval module $\Field[I]$ is $\mathfrak X$-encodable, if and only if $I \in \mathfrak X$. The latter property is extensively used in the classification results of \cite[Sec. 4]{WONG}. 
\end{remark}
A direct consequence of the intersection stability of encoding structures is that $\permodx$ is always an additive subcategory of $\Vect^{\Pos}$. This follows from the following lemma also used in \cite{Miller-hom} and \cite{WONG}.
\begin{lemma}\label{L:common_encoding}
Let $M_1,\dots, M_n$ be finitely encodable persistence modules over $\Pos$. 
Then there exist an encoding map $e\colon \Pos \to \Pos'$ which is part of an encoding for $M_1, \dots, M_n$ simultaneously. 
Furthermore, if $M_1, \dots, M_n$ are $\mathfrak X$-encodable, for some encoding structure $\mathfrak X$ on $\Pos$, then $e$ can also be taken of class $\mathfrak X$.
\end{lemma}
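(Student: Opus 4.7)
The plan is to build the common encoding as the pointwise product of the given individual encodings and then to check that all axioms of an encoding survive passage to this product.

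Concretely, suppose each $M_i$ comes with a finite encoding consisting of a finite poset $\Pos_i'$, a pointwise finite dimensional module $M_i' \in \Vect^{\Pos_i'}$, a poset map $e_i \colon \Pos \to \Pos_i'$, and an isomorphism $\phi_i \colon M_i \xrightarrow{\sim} e_i^* M_i'$. I would set $\Pos' := \Pos_1' \times \cdots \times \Pos_n'$ (a finite poset under the product order) and define $e \colon \Pos \to \Pos'$ by $e(p) := (e_1(p), \dots, e_n(p))$. This is manifestly a poset map. Writing $\pi_i \colon \Pos' \to \Pos_i'$ for the $i$-th projection, I would then define the candidate replacement module as $\widetilde M_i := \pi_i^* M_i' \in \Vect^{\Pos'}$. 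Since each stalk of $\widetilde M_i$ is a stalk of $M_i'$ and hence finite dimensional, $\widetilde M_i$ is pointwise finite dimensional.

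The remainder of the argument is a formal check. By functoriality of precomposition, $e^*\widetilde M_i = e^*\pi_i^* M_i' = (\pi_i \circ e)^* M_i' = e_i^* M_i'$, so the composite $\phi_i$ supplies the required isomorphism $M_i \xrightarrow{\sim} e^*\widetilde M_i$. Thus $(e, \Pos', \widetilde M_i, \phi_i)$ is a simultaneous encoding for all the $M_i$.

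For the second statement, I would observe that for $q = (q_1, \dots, q_n) \in \Pos'$ one has $e^{-1}(q) = \bigcap_{i=1}^n e_i^{-1}(q_i)$. Each $e_i^{-1}(q_i)$ lies in $\mathfrak X$ by hypothesis, and $\mathfrak X$ is closed under finite intersections, being an algebra (closure under unions and complements, combined with De Morgan, forces this). Hence every fiber of $e$ lies in $\mathfrak X$, so $e$ is an encoding of class $\mathfrak X$.

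There is no serious obstacle here; the entire lemma reduces to the observation that an encoding structure, being an algebra, is intersection-stable, and that the diagonal of finitely many encoding maps into a finite product poset remains a poset map with controlled fibers. The only detail one should not gloss over is the identification $e^* \pi_i^* = e_i^*$, which is just the contravariant functoriality of the precomposition assignment $F \mapsto F^*$.
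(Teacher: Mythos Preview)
Your proof is correct and follows essentially the same argument as the paper: form the product of the individual encoding posets, take the diagonal map $e=(e_1,\dots,e_n)$, pull the encoding modules back along the projections, and use intersection-stability of $\mathfrak X$ for the fibers. The only cosmetic differences are that the paper treats $n=2$ and leaves the general case as analogous, and that you spell out a couple of details (pointwise finite dimensionality of $\pi_i^*M_i'$ and the De~Morgan reason algebras are intersection-closed) that the paper leaves implicit.
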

\begin{proof}
We prove the case $n=2$, the general case is completely analogous.
Furthermore, we prove the second statement as it implies the earlier for the special case where $\mathfrak X$ is the encoding structure of all upsets in $\Pos$.
Choose $\mathfrak X$-encoding maps $e_1: \Pos \to \Pos_1, e_2: \Pos \to \Pos_2$ and encoding modules $M_1' \in \Vect^{\Pos_1}$, $M_2' \in \Vect^{\Pos_2}$. Now, set $e: \Pos \to \Pos_1 \times \Pos_2$ to the map induced by the universal property of the product and $M_1'':= \pi_{1}^*M'_1$,  $M_2'':= \pi_{2}^*M'_2$, where $\pi_i$ denotes the respective projection to $\Pos_i$. We obtain, 
    \[ M_i \cong e_i^*M'_i = (\pi_i \circ e)^*M''_i \cong e^*M''_i,\]
for $i=1,2.$
Since the fibers of $e$ are given by intersection of fibers of $e_1$ and $e_2$, $e$ again defines an $\mathfrak X$-encoding. 
\end{proof}
Using the addivity of the pullback functors, we immediately obtain from this:
\begin{corollary}
Let $\mathfrak X$ be an encoding stucture on a poset $\Pos$. Then $\permodx$ is a full additive subcategory of $\Vect^{\Pos}$.
\end{corollary}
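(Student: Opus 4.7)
The plan is to note that fullness holds by definition (the subcategory is defined as a full one), so the content is additivity. I will use \cref{L:common_encoding} as the main tool; the result that pullback functors preserve colimits (and in particular direct sums) will do the rest.

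First I would handle the zero object: the zero persistence module $0 \in \Vect^{\Pos}$ is trivially $\mathfrak X$-encodable, since the constant map to the one-point poset has $\Pos$ itself as its unique fiber, and $\Pos \in \mathfrak X$ by the first axiom of an encoding structure. Next, to show closure under finite direct sums, let $M_1, M_2 \in \permodx$. By \cref{L:common_encoding}, I obtain an $\mathfrak X$-encoding map $e \colon \Pos \to \Pos'$ along with pointwise finite dimensional $M_1', M_2' \in \Vect^{\Pos'}$ and isomorphisms $M_i \cong e^*M_i'$ for $i=1,2$.

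Then I would observe that the precomposition functor $e^*\colon \Vect^{\Pos'} \to \Vect^{\Pos}$ is additive (in fact it preserves all limits and colimits), so
\[
M_1 \oplus M_2 \;\cong\; e^*M_1' \oplus e^*M_2' \;\cong\; e^*(M_1' \oplus M_2').
\]
Since $\Pos'$ is finite and $M_1', M_2'$ are pointwise finite dimensional, the direct sum $M_1' \oplus M_2'$ is again pointwise finite dimensional, giving an $\mathfrak X$-encoding of $M_1 \oplus M_2$. Hence $\permodx$ is closed under binary direct sums, and inductively under all finite direct sums. Combined with the fact that the hom-sets and composition structure are inherited from $\Vect^{\Pos}$ by fullness, this shows $\permodx$ is a full additive subcategory.

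There is no real obstacle here beyond invoking the common encoding lemma; the step where I would be most careful is checking that the common encoding $e$ provided by \cref{L:common_encoding} still has class $\mathfrak X$, which is precisely the content of the second statement of that lemma, so it is handed to me for free.
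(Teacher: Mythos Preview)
Your proof is correct and follows exactly the approach sketched in the paper, which simply says the corollary follows from \cref{L:common_encoding} together with the additivity of pullback functors. You have merely spelled out the details (including the zero object, which the paper leaves implicit), so there is nothing to add.
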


\section{Connective encoding structures and abelianity of $\permodx$}
For many intents and purposes - for example to apply the framework developed in \cite{WONG} - it is desirable for $\permodx$ to be a full abelian  subcategory of $\Vect^{\Pos}$. However, the question of when this is the case is somewhat more subtle then the question of addivity, even when one restricts to nice enough encoding structures, such as the PL one.
Consider, for example, the following morphism of persistence modules described similarly in \cite[Ex. 4.25]{Miller-hom}.
\begin{examplet}\label{ex:bad_kernel}
Let $\nabla = \{ (x,y) \in \mathbb R^2 \mid x + y = 2\}$ be the antidiagonal in $\mathbb R^2$, shifted by $(1,1)$. Let $U=\{ (x,y) \in \mathbb R^2 \mid x + y \geq 2\}$ be the upset of points above (or equal to) $\nabla$. Next, consider the map of piecewise linearly encoded persistence modules $\Field[U]^2 \to \Field[\nabla]$ given by $0$ outside of $\nabla$, and by multiplication by the $1\times 2$ matrix $(y,-x)$ at $(x,y) \in \nabla$. Note that this indeed defines a morphism of persistence modules, as there are no nontrivial commutativity conditions to verify here. The kernel of this map is given by $\Field^2$ strictly above $\nabla$, and by the origin line going through $(x,y)$ at $(x,y) \in \nabla$. Note that this module can not be finitely encoded, since at every point $u$ strictly above $\nabla$ there are infinitely many different images coming from transition maps starting at $\nabla$ and ending in $u$.
\end{examplet}
Philosophically speaking, the problem with the last example is that the set $\nabla$, while perfectly tame as a topological space, is highly disconnected when considered as a poset (compare \cref{def:leq-con} to make this precise).
A certain amount of control over the path components (in a poset sense) of intervals generated by the encoding structure is required to obtain the necessary control over morphisms.
\begin{definition}\label{def:leq-con}
Let $\Pos$ be a poset. We say $\Pos$ is \define{$\leq$-connected} if it is connected as a category, that is, if for every $p,p' \in \Pos$ there exists a finite zigzag
\[ 
p \lessgtr p_1 \lessgtr  \dots \lessgtr  p_k \lessgtr  p'.
\]  
The maximal $\leq$-connected subsets of $\Pos$ are called its \define{$\leq$-connected components}. We say that an encoding structure $\mathfrak X$ on $\Pos$ is \define{connective}, if every interval $I$ in $\mathfrak X$ has only finitely many $\leq$-connected components and these are also elements of $\mathfrak X$.
\end{definition}
\cref{thm:X_conn_abelian} below states that for connective encoding structures the category $\permodx$ is indeed abelian. 
From the defining property of an interval, it is immediate that:
\begin{lemma}\label{lem:comp_of_int}
    The $\leq$-connected components of an interval $I \subset \Pos$ are themselves intervals of $\Pos$.
\end{lemma}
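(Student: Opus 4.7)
The plan is to unpack the interval property directly against the definition of a $\leq$-connected component. Fix a $\leq$-connected component $C$ of $I$, and suppose $q, q' \in C$ with $q \leq p \leq q'$ for some $p \in \Pos$. The goal is to show $p \in C$.

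The first step is to observe that $p \in I$. This is immediate from the assumption that $I$ is an interval: by definition, $q \leq p \leq q'$ together with $q, q' \in I$ forces $p \in I$. The second step is to note that the single relation $p \leq q'$ already constitutes a (length-one) zigzag $p \lessgtr q'$ witnessing that $p$ and $q'$ are $\leq$-connected when viewed inside $I$, since both $p$ and $q'$ lie in $I$. By the definition of $\leq$-connected component, this means $p$ and $q'$ belong to the same $\leq$-connected component of $I$. Since $q' \in C$, we conclude $p \in C$, as desired.

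There is essentially no obstacle here; the lemma is a one-line consequence of combining the defining closure property of intervals (closure under elements sandwiched between two members) with the fact that a single comparability $p \leq q'$ already suffices to place $p$ in the same component as $q'$. No use is made of the upset/downset characterization or of any extra structure, which is why the author signals that this is immediate from the definition.
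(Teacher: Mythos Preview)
Your argument is correct and is exactly the unpacking of what the paper means by ``immediate from the defining property of an interval''; the paper does not give any further details beyond that remark, and your two steps (using the interval property to get $p\in I$, then the single relation $p\leq q'$ to place $p$ in the same component as $q'$) are precisely the intended justification.
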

An easy elementary verification shows that the cubical structure $\mathfrak C$ on $\mathbb R^n$ is connective. More generally, one may show that under the additional assumption of being topologically closed (or open), all of the examples of \cref{ex:Enc_Structures} are connective, which is the content of \cref{cor:low_cl_conn}. We may now state one of the main results, which states that connectivity guarantees abelianity of $\permodx$.
\begin{theorem}\label{thm:X_conn_abelian}
Let $\mathfrak X$ be a connective algebra on the poset $\Pos$. 
Then the category $\permodx$ is a full abelian subcategory of $\Vect^{\Pos}$.
\end{theorem}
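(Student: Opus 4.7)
The plan is to reduce the problem to the finite-poset case, where abelianity is automatic. Since additivity of $\permodx$ is already established, it suffices to show that kernels and cokernels (computed in $\Vect^{\Pos}$) of morphisms in $\permodx$ are again $\mathfrak X$-encodable. I will prove the stronger claim that any morphism $f\colon M \to N$ between $\mathfrak X$-encodable modules admits a \emph{morphism encoding}: an $\mathfrak X$-encoding map $e\colon \Pos \to \Pos'$ together with $M', N' \in \Vect^{\Pos'}$ and $f'\colon M' \to N'$ satisfying $f = e^*f'$. Granted this, finiteness of $\Pos'$ and pointwise finite-dimensionality of $M',N'$ render $\ker f'$ and $\coker f'$ pointwise finite dimensional, and since $e^*$ is computed pointwise and hence is exact, $\ker f = e^*\ker f'$ and $\coker f = e^*\coker f'$ are both $\mathfrak X$-encoded by $e$.

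For the construction I proceed in two steps. First, \cref{L:common_encoding} provides a common $\mathfrak X$-encoding $\tilde e\colon \Pos \to \tilde\Pos$ of $M$ and $N$. Each fiber $\tilde e^{-1}(\tilde q)$ is an interval in $\mathfrak X$ (fibers of monotone maps between posets are automatically intervals), so by connectivity it decomposes into finitely many $\leq$-connected components, each in $\mathfrak X$. Let $\Pos'$ be the finite set of all such components, ordered by the reflexive transitive closure of the relation $C \preceq C'$ defined to hold whenever there exist $p \in C$ and $p' \in C'$ with $p \leq p'$ in $\Pos$. The map $e\colon \Pos \to \Pos'$ sending a point to its component is monotone, factors $\tilde e = q\circ e$ through the projection $q\colon \Pos' \to \tilde \Pos$, and has fibers equal to the $\leq$-connected components, hence in $\mathfrak X$. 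The desired encoding modules for $M$ and $N$ over $\Pos'$ are then $q^*\tilde M$ and $q^*\tilde N$.

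The essential feature of the refinement is that every fiber of $e$ is now $\leq$-connected. Hence if $p,p'$ lie in one fiber $C$, a zigzag in $C$ connecting them stays inside a single fiber of $\tilde e$, along which the structure maps of $\tilde e^*\tilde M$ and $\tilde e^*\tilde N$ are identities; naturality of $f$ then forces $f_p = f_{p'}$. Defining $f'_C := f_p$ for any $p \in C$ thus yields a well-defined linear map, and naturality of $f'$ on any generating relation $C \preceq C'$ is immediate from naturality of $f$ at a witness $p \leq p'$; transitively generated relations then follow by composition of structure maps. This produces the desired $f'\colon q^*\tilde M \to q^*\tilde N$ with $f = e^*f'$.

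The main delicate point is to verify that the transitive closure of $\preceq$ is antisymmetric, so that $\Pos'$ is genuinely a poset. If $C \leq C'$ and $C' \leq C$ both hold in $\Pos'$, the resulting cycle of witnesses of $\preceq$ forces all intermediate elements to lie within a single fiber of $\tilde e$ (their $\tilde e$-images form a cyclic chain of inequalities in $\tilde\Pos$ and are hence all equal), and then the constituent $\leq$-inequalities compose to a zigzag inside this fiber between a point of $C$ and a point of $C'$, forcing $C = C'$. This is the one place where the full strength of the connectivity hypothesis enters beyond the bare fiber-splitting, and with it in hand every remaining verification (finiteness of $\Pos'$, monotonicity of $e$, membership of fibers of $e$ in $\mathfrak X$) is direct.
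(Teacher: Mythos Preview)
Your argument is correct and follows the same overall architecture as the paper: take a common $\mathfrak X$-encoding (\cref{L:common_encoding}), refine it so that the fibers are the $\leq$-connected components (this is exactly the construction of \cref{lem:improve_phi}, including your antisymmetry check), and then conclude that the morphism descends to the finite poset. The one genuine difference is in that last step. The paper proves the stronger statement that $e^*$ is \emph{fully faithful} (\cref{prop:estar_ff}), by analysing the counit of the adjunction $e_! \dashv e^*$; it then quotes a general homological-algebra fact to deduce abelianity. You instead verify by hand that $f$ is constant on each $\leq$-connected fiber and hence defines an $f'$ with $f = e^*f'$, and then appeal to exactness of $e^*$ for kernels and cokernels directly. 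Your route is more elementary (no Kan extensions or adjunctions), while the paper's buys the reusable result that the entire essential image $e^*(\Vect^{\Pos'}_{\fin})$ is a full abelian subcategory, which is what underlies the remark that all finite (co)limit computations in $\permodx$ can be carried out over a finite poset.
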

\begin{proof}
This is a special case of \cref{P:connective_alg_finest_enc} below.
Indeed, \cref{thm:X_conn_abelian} follows from the following easily verified fact of homological algebra:  Let $\mathcal B$ be a full additive subcategory of an abelian category $\mathcal A$. Then, $\mathcal{B}$ is an abelian subcategory of $\mathcal{A}$, if and only if every morphism in $\mathcal B$ is contained in some full abelian subcategory $\mathcal C$ of $\mathcal{A}$, such that $\mathcal C \subset \mathcal B$.
\end{proof}
The following proposition guarantees that when $\mathfrak X$ is connective, then essentially all finite computations in $\permodx$ may instead be performed in $\Vect^{\Pos'}$ over some finite poset $\Pos'$.
\begin{proposition}
\label{P:connective_alg_finest_enc}
Let $\mathfrak X$ be a connective encoding structure on a poset $\Pos$, and $M_0, \dots, M_n$ a finite set of $\mathfrak X$-encodable persistence modules.
Then there exists a common $\mathfrak X$-encoding map $e \colon \Pos \to \Pos'$ of $M_0, \cdots, M_n$ such that $e^* \colon \Vect^{\Pos'} \to \Vect^{\Pos}$ is fully faithful. \\
In particular, since $e^*$ is exact, the essential image $\mathcal I = e^{*}(\Vect^{\Pos'}_{\fin})$ is a full abelian subcategory of $\Vect^{\Pos}$ such that $M_0, \dots, M_n \in \mathcal I \subset \permodx$.
\end{proposition}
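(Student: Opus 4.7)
The plan is to start from an arbitrary common $\mathfrak X$-encoding of $M_0, \ldots, M_n$ and refine it by splitting every fiber into its $\leq$-connected components, which will render every fiber of the refined encoding $\leq$-connected---the crucial property that drives full faithfulness of the associated pullback. By \cref{L:common_encoding} I can choose some common $\mathfrak X$-encoding $e_0 \colon \Pos \to \Pos_0$ with $M_i \cong e_0^* M_i^0$ for some $M_i^0 \in \Vect^{\Pos_0}$; each fiber $e_0^{-1}(p_0')$ is then an interval of $\Pos$ lying in $\mathfrak X$, and by connectivity decomposes into finitely many $\leq$-connected components, all in $\mathfrak X$.

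Next, I would take $\Pos'$ to be the set of all such components as $p_0'$ varies over $\Pos_0$, with $e \colon \Pos \to \Pos'$ sending $p$ to the component of $e_0^{-1}(e_0(p))$ that contains it, and equip $\Pos'$ with the order generated by declaring $[F] \leq [G]$ whenever there exist $p \in F$, $q \in G$ with $p \leq q$. The main technical hurdle is showing this relation is antisymmetric: along any witnessing closed zigzag $[F] \leq \cdots \leq [G] \leq \cdots \leq [F]$, projecting down via the canonical map $q \colon \Pos' \to \Pos_0$ forces every component appearing in the zigzag to lie within a single fiber of $e_0$; then each basic witness $a \leq b$ relates two points of a common $\leq$-connected component of that fiber, so consecutive components in the zigzag must agree and in particular $[F] = [G]$. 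With that in hand, $\Pos'$ is a finite poset, $e$ is a surjective poset map factoring $e_0 = q \circ e$, and $M_i \cong e^*(q^* M_i^0)$ shows $e$ is still a common $\mathfrak X$-encoding.

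The heart of the argument is then the full faithfulness of $e^*$. Faithfulness follows from surjectivity of $e$. For fullness, given $\varphi \colon e^* M' \to e^* N'$ in $\Vect^{\Pos}$, I would argue that $\varphi_p$ depends only on $e(p)$: if $e(p) = e(q)$, any witnessing zigzag $p \lessgtr r_1 \lessgtr \cdots \lessgtr q$ stays inside a single $\leq$-connected component of a fiber, so all structure maps of $e^* M'$ and $e^* N'$ along it are identities, and the naturality squares of $\varphi$ force $\varphi_p = \varphi_q$. The resulting pointwise family $\psi$ is natural across a basic relation $[F] \leq [G]$ by reading off the corresponding square from any witness $p \leq q$, and hence natural in general by composition, yielding $\psi$ with $e^* \psi = \varphi$. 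The closing claim is then formal: $e^*$ is exact since pointwise evaluation is exact, so combined with full faithfulness the essential image $\mathcal I$ is closed under kernels, cokernels, and finite direct sums formed in $\Vect^{\Pos}$, while $\mathcal I \subset \permodx$ and $M_i \in \mathcal I$ are immediate from the construction.
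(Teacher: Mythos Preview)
Your proof is correct and follows the same overall strategy as the paper: start from a common $\mathfrak X$-encoding (\cref{L:common_encoding}), refine it by passing to $\leq$-connected components of fibers, verify antisymmetry of the generated order by projecting cycles down to $\Pos_0$, and then establish full faithfulness of $e^*$. Your construction of $\Pos'$ and the antisymmetry argument are exactly the content of the paper's \cref{lem:improve_phi}.

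The one genuine difference is in how full faithfulness is proved. The paper (\cref{prop:estar_ff}) works with the left adjoint $e_!$ given by left Kan extension and shows that the counit $e_! e^* \Rightarrow \id$ is a pointwise isomorphism, analyzing the colimit over $e^{-1}(-\infty,q']$ and using the generation-by-images condition to push any class into the fiber over $q'$. Your argument is more elementary and stays entirely on the level of natural transformations: faithfulness from surjectivity of $e$, and fullness by showing directly that $\varphi_p$ is constant along zigzags inside a fiber (since all structure maps there are identities), then checking naturality on generating relations. Both arguments exploit the same two hypotheses (connected fibers, order generated by images of relations), but yours avoids the machinery of Kan extensions and the counit criterion, while the paper's approach is more categorical and would generalize more readily to target categories other than $\Vect$.
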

Before we provide a proof, note that \cref{P:connective_alg_finest_enc} does in particular provide a recipe on how to compute finite limits and colimits in $\permodx$.
To prove \cref{P:connective_alg_finest_enc}, we first need to investigate when the functor $e^*$ is fully faithful.

\begin{proposition}\label{prop:estar_ff}
Let $e: \Pos \to \Pos'$ be a map of posets such that:
\begin{enumerate}
    \item The relation $\leq$ on $\Pos'$ is generated under transitivity by the images of the relations in $\Pos$ under $e$;
     \item All fibers of $e$ are nonempty and $\leq$-connected.
\end{enumerate}
Then the functor $e^* \colon \mathcal \Vect^{\Pos'} \to \Vect^{\Pos}$ is fully faithful.
\end{proposition}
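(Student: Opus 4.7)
The plan is to construct, for each morphism $\eta \colon e^*N_1 \to e^*N_2$ downstairs in $\Vect^{\Pos}$, a (necessarily unique) preimage $\phi \colon N_1 \to N_2$ upstairs in $\Vect^{\Pos'}$. Injectivity of $e^*$ on hom sets is immediate from nonemptiness of fibers: two morphisms $\phi, \psi \colon N_1 \to N_2$ that agree after pullback must in particular agree at some chosen representative $p \in e^{-1}(p')$ for each $p' \in \Pos'$, which forces $\phi_{p'} = \psi_{p'}$. So the real content lies in surjectivity.

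For surjectivity, given $\eta$, I would define $\phi_{p'} := \eta_p$ for any choice of $p \in e^{-1}(p')$. Well-definedness of this assignment is the place where assumption (ii) enters. Indeed, if $p, q \in e^{-1}(p')$ satisfy $p \leq q$, then $e(p) \leq e(q)$ is the identity relation on $p'$, so both $N_1$ and $N_2$ send it to $\id$, and the naturality of $\eta$ applied to $p \leq q$ collapses to $\eta_p = \eta_q$. Since $e^{-1}(p')$ is $\leq$-connected, iterating this argument along any zigzag shows that the value $\eta_p$ is independent of the chosen representative.

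For naturality of $\phi$, one needs to check that the expected square commutes for every relation $p' \leq q'$ in $\Pos'$. Here assumption (i) is used: it allows one to factor $p' \leq q'$ as a finite composition of generating relations, each of the form $e(a) \leq e(b)$ for some $a \leq b$ in $\Pos$. For each such generating step, picking representatives $a \in e^{-1}(p'')$ and $b \in e^{-1}(q'')$ and using the naturality of $\eta$ at $a \leq b$ directly gives the desired square, since by construction $\phi_{e(a)} = \eta_a$ and $\phi_{e(b)} = \eta_b$. Composing these squares along the factorization, together with functoriality of $N_1$ and $N_2$, yields naturality for $p' \leq q'$.

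The main obstacle (though it is a minor one) is the interlocking of the two hypotheses: condition (ii) is precisely what makes the pointwise definition of $\phi$ unambiguous, while condition (i) is precisely what makes the pointwise assignment naturally assemble into a morphism in $\Vect^{\Pos'}$. Both are genuinely needed for surjectivity, and neither can be dropped. Once these two ingredients are isolated, the remainder is routine diagram chasing.
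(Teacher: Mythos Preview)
Your argument is correct and complete. It differs from the paper's proof in approach: the paper invokes the adjunction $e_! \dashv e^*$ and shows that the counit $\varepsilon \colon e_! e^* \to 1$ is an isomorphism, computing $e_! e^* M$ pointwise as a colimit over the downset preimage $e^{-1}(-\infty,q']$ and then using conditions (i) and (ii) to reduce this colimit to the value $M_{q'}$. Your proof instead constructs the inverse on hom-sets by hand: you pick a representative in each fiber, use (ii) to make this well-defined, and use (i) to check naturality. The two arguments deploy the hypotheses in essentially the same places, but yours is more elementary---it avoids Kan extensions entirely and would be accessible to a reader unfamiliar with the counit characterization of fully faithful right adjoints. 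The paper's route, on the other hand, yields as a byproduct an explicit identification of $e_! e^* M$ with $M$, which can be convenient when one later wants to pass freely between $\Vect^{\Pos}$ and $\Vect^{\Pos'}$ along both adjoints.
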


\begin{proof}
Recall the basic fact from category theory that, given an adjunction of functors $L \dashv R$, the right adjoint is fully faithful, if and only if the counit of adjunction $\varepsilon \colon LR \to 1$ is an isomorphism. In the case of the adjunction $e_! \dashv e^*$, the counit $\varepsilon_M$  for $M \in \Vect^{\Pos'}$ at $q' \in \Pos'$ is given by the canonical morphism
\[\varepsilon:\varinjlim_{q \in e^{-1}(-\infty, q']}  M_{e(q)} \to M_{q'} \, .
\]  
In particular, it suffices to show that, under the assumptions, $\varepsilon$ is an isomorphism. 
Consider the subdiagram of the diagram indexed over $e^{-1}(-\infty, q']$ that is given by restricting to $e^{-1}(q')$.
This is now a constant $M_{q'}$-valued diagram over a nonempty, connected category. In particular, the natural map 
\[ 
\varinjlim_{q \in e^{-1}\{q'\}}  M_{e(q)} \to M_{q'} 
\] 
is an isomorphism. The latter map fits into the commutative triangle
\[
\begin{tikzcd}
    \varinjlim_{q \in e^{-1}\{q'\}} \arrow[rd] M_{e(q)} \arrow[r] & M_{q'} \\
    &\varinjlim_{q \in e^{-1}(-\infty,q']}  M_{e(q)} \arrow[u, "\varepsilon_{q'}"]
\end{tikzcd}.
\]
In particular, it follows that $\varepsilon_{q'}$ is surjective.
Hence, if we can show that 
\begin{equation}\label{E:sur_col}
\varinjlim_{q \in e^{-1}\{q'\}}  M_{e(q)} \to \varinjlim_{p \in e^{-1}(-\infty,q']}  M_{e(p)}
\end{equation} 
is surjective, then $\varepsilon_{q'}$ is injective, and thus an isomorphism. 
Consider any generator of the colimit on the right, given by the equivalence class of some $v \in M_{e(p)}$ with $e(p) \leq q'$. 
By the first assumption, the relation $e(p) \leq q'$ is obtained from some sequence 
\[ 
p_{0} \leq \tilde p_{1}; p_{1} \leq \tilde p_{2}; \dots; p_{k-1} \leq \tilde p_{k} 
\] 
such that $e(p_i) = e(\tilde p_i)$, $e(p) = e( p_0)$ and $e(\tilde p_k) = q'$. 
Assume we have shown that $v$ has the same equivalence class as some element $\tilde v_l \in M_{\tilde p_l}$, for some $l \leq k$. 
By the connecteness assumption on the fibers, there exists a zigzag from $\tilde p_l$ to $p_l$. 
As the diagram is given by isomorphisms (the identity) on each fiber, $\tilde v_l$ is identified with some $v_l \in M_{p_l}$ along this zigzag. 
The latter is identified with some $\tilde v_{l+1} \in M_{\tilde p_{l+1}}$ by the relation $p_l \leq \tilde p_{l+1}$. 
By induction, $v$ is ultimately identified with some element in $M_{e(q)}$, for $q$ in the fiber of $q'$.
In particular, its equivalence class lies in the image of the colimit of the diagram restricted to $e^{-1}(q')$, showing the surjectivity of the map in (\ref{E:sur_col}) and thus the required injectivity of $\varepsilon_{q'}$.
\end{proof}

Next, we show that in the case of a connective encoding structure $\mathfrak X$, any $\mathfrak X$-encoding can be replaced by an encoding that fulfills the requirements of \cref{prop:estar_ff}.

\begin{lemma}\label{lem:improve_phi}
Let $e\colon \Pos \to \Pos'$ be a map of posets such that each of its fibers has finitely many $\leq$-components. 
Then $e$ admits a factorization 
\[ 
\begin{tikzcd}
    \Pos \arrow[rr, "e"] \arrow[rd, "\hat e"']&& \Pos' \\
    & \hat \Pos \arrow[ru, dashed]& 
\end{tikzcd}
\]
where $\hat e$ is a map fulfilling the requirements of \cref{prop:estar_ff}. 
Further, $\hat e$ can be taken so that its fibers are precisely the $\leq$-connected components of the fibers of $e$. In particular, if $e$ is a $\mathfrak X$-encoding map where $\mathfrak X$ is a connective encoding structure, then $\hat e$ is also an $\mathfrak X$-encoding map.
\end{lemma}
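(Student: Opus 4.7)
The plan is to construct $\hat \Pos$ directly as the set of $\leq$-connected components of the fibers of $e$. Concretely, I would let $\hat \Pos = \bigsqcup_{q' \in \Pos'} \pi_0(e^{-1}(q'))$, where $\pi_0$ denotes the set of $\leq$-connected components; define $\hat e \colon \Pos \to \hat \Pos$ by sending $p$ to the $\leq$-component of $e^{-1}(e(p))$ containing it; and let the dashed map $\hat \Pos \to \Pos'$ send each component of $e^{-1}(q')$ to $q'$. Set-theoretically this already factors $e$, and by definition the fibers of $\hat e$ are exactly the $\leq$-components of the fibers of $e$, which is the second assertion of the lemma.

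Next, I would equip $\hat \Pos$ with the order generated by declaring $C \leq_0 C'$ whenever there exist $p \in C$, $p' \in C'$ with $p \leq p'$ in $\Pos$, then taking the transitive and reflexive closure. Order-preservation of both $\hat e$ and the map to $\Pos'$ is then immediate, and so is the first condition of \cref{prop:estar_ff}. The second condition is automatic, since each fiber of $\hat e$ is, by construction, a $\leq$-connected subposet of $\Pos$. The slightly delicate point is antisymmetry of the resulting relation: a cycle $C \leq C' \leq C$ maps to a cycle in the genuine poset $\Pos'$, so all components appearing in the cycle must lie in the same fiber of $e$; any elementary step $C_i \leq_0 C_{i+1}$ in the cycle then witnesses a relation $p \leq p'$ strictly inside that single fiber, and since $p$ and $p'$ are thereby already zigzag-connected inside the fiber, the components $C_i$ and $C_{i+1}$ must coincide.

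The main remaining obstacle is the $\mathfrak X$-encoding assertion. Finiteness of $\hat \Pos$ is immediate from finiteness of $\Pos'$ together with the hypothesis that every fiber of $e$ has finitely many $\leq$-components. What requires actual work is showing that each fiber of $\hat e$ lies in $\mathfrak X$. Here I would combine the third axiom of an encoding structure with connectivity: a fiber $F \in \mathfrak X$ of $e$ decomposes as a finite union $F = I_1 \cup \dots \cup I_m$ of intervals $I_j \in \mathfrak X$, and connectivity splits each $I_j$ into finitely many $\leq$-components, each again in $\mathfrak X$. The key combinatorial observation is that every $\leq$-component $C$ of $F$ is a union of some of these finitely many sub-components: if $C$ meets $I_j$ at a point $p$, then $C$ must contain the entire $\leq$-component of $I_j$ through $p$, since that component is already zigzag-connected inside $I_j \subseteq F$. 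Hence $C$ is a finite union of sets in $\mathfrak X$, and closure of $\mathfrak X$ under finite unions concludes. Once this is in place, $\hat e$ satisfies both the finiteness and fiber conditions of an $\mathfrak X$-encoding map, completing the argument.
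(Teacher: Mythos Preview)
Your construction and argument coincide with the paper's: both take $\hat\Pos$ to be the set of $\leq$-components of the fibers, generate the order from relations witnessed in $\Pos$, and verify antisymmetry by pushing a putative cycle down to $\Pos'$ to force all components into a single fiber, where the witnessing relations then collapse them. The only notable difference is that you spell out the $\mathfrak X$-encoding claim (which the paper leaves implicit), and here your detour through the third axiom is unnecessary: fibers of a poset map are automatically intervals, so connectivity of $\mathfrak X$ applies to each fiber directly and yields that its $\leq$-components lie in $\mathfrak X$ without further decomposition.
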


\begin{proof}
We take $\hat \Pos$ to be the set given by the $\leq$-components of the fibers of $e$. 
We take the partial order on $\hat \Pos$ to be the one generated by the following relations: 
For $I \subset \varphi^{-1}(p')$ and $ J \subset \varphi^{-1}(q')$, we set $I \lesssim J$ if and only if there exists $p \in I$ and $q \in J$ fulfilling $i' \leq j'$.
To show that this indeed induces a partial ordering on $\hat \Pos$, i.e. that anti-symmetry is fulfilled, we need to show that $\lesssim$ admits no cycles. 
So, suppose we are given a sequence in $\Pos$
\[
p_{0} \leq \tilde p_{1}; p_{1} \leq \tilde p_{2}; \dots; p_{k} \leq \tilde p_{0} 
\]
such that, for $0 \leq l \leq k$, $\tilde p_{l}$ and $p_{l}$ lie in the path component of the fiber of $e(p_l)=e(\tilde p_l)$, respectively. 
By applying $e$ and using anti-symmetry on $\Pos'$, we obtain that all of the $p_l$ and $\tilde p_l$ lie in the same fiber. 
As $p_{l}$ and $\tilde p_l$ lie in the same path component, we can complete the sequence by filling in zigzags in the respective path components in between $p_l$ and $\tilde p_l$. 
This gives a zigzag between $p_{0}$ and $\tilde p_0$ lying entirely in the same fiber, showing all of the $p_l$ and $\tilde p_l$ indeed belong to the same component.
Then, by construction, the map $\hat e\colon\Pos \to \hat \Pos$ given by sending each element to the respective component of the fiber it is contained in, defines a map of partially ordered sets, fulfilling the conditions of \cref{prop:estar_ff}, with the dashed factorization map just being given by sending each component $ I \subset e^{-1}(q)$ to $q$. 
\end{proof}
We now have all the tools necessary to the proof of \cref{P:connective_alg_finest_enc}:
\begin{proof}[Proof of \cref{P:connective_alg_finest_enc}]
    By \cref{L:common_encoding}, we may choose a common encoding map $e' \colon \Pos \to \Pos'$, for $M_0, \dots, M_n$ of class $\mathfrak X$. Now, apply \cref{lem:improve_phi} to $e'$, to obtain an $\mathfrak X$-encoding map $\hat e$, fulfilling the requirements of \cref{prop:estar_ff}. Since, $\hat e$ factors through $e'$, it still encodes $M_0, \dots, M_n$. 
\end{proof}

\subsection{Examples of connective encoding structures}
In this subsection, we show that connective encoding structures on $\mathbb R^n$ are ubiquitous, and arise naturally by also taking the topology of $\mathbb R^n$ into account. More precisely we prove \cref{cor:low_cl_conn} which states that closed PL, semialgebraic, or more generally upsets in some o-minimal structure on $\mathbb R^n$ generate a connective encoding structure.
\cref{thm:special_case_of_main_result} then follows from \cref{cor:low_cl_conn} together with \cref{thm:X_conn_abelian}. Let us begin by taking a look at how connective components in the $\leq$ sense interact with connective components in the topological sense.
\begin{definition}
    If $\Pos$ is a poset equipped with the structure of a topological space, we say $\Pos$ is \define{locally $\leq$-connected} if it admits a neighborhood basis by $\leq$-connected sets.
\end{definition}
We will make use of the following easily proven lemma.
\begin{lemma}\label{lem:leq_con_is_con}
Let $\Pos$ be a poset equipped with the structure of a topological space. If $\Pos$ is locally $\leq$-connected, then the topological connected components of $\Pos$ refine the $\leq$-connected components of $\Pos$. Conversely, if $\Pos$ is such that every interval $[p,q]$ is topologically connected, then the $\leq$-components refine the topological components of $\Pos$.
\end{lemma}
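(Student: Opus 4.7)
The plan is to prove the two implications separately by standard point-set topology. For the first, I would fix a topological component $C \subset \Pos$ and a basepoint $p_0 \in C$, and consider the set $A \subset C$ of points that lie in the same $\leq$-connected component as $p_0$. The strategy is to show $A$ is nonempty, open in $C$, and closed in $C$; topological connectedness of $C$ then forces $A = C$, meaning all points of $C$ share a single $\leq$-component. Nonemptiness is trivial since $p_0 \in A$. For openness, given $p \in A$, local $\leq$-connectedness supplies a $\leq$-connected neighborhood $N$ of $p$; every point of $N$ is then $\leq$-connected to $p$ and hence to $p_0$, and since $N$ contains an open set $V$ around $p$, we get $V \cap C \subset A$. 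The analogous argument starting from $p \in C \setminus A$ shows the complement is also open.

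For the second implication, I would take a $\leq$-component $K$ and two arbitrary points $p, p' \in K$, and show they lie in the same topological component. By definition of $\leq$-connectedness there is a finite zigzag $p \lessgtr p_1 \lessgtr \cdots \lessgtr p_k \lessgtr p'$ in $\Pos$, so it suffices to check that each consecutive comparable pair lies in the same topological component. But if $a \leq b$ (with the reverse case symmetric), the interval $[a,b]$ contains both $a$ and $b$ and is topologically connected by hypothesis, so they lie in the same topological component. Transitivity of ``being in the same topological component'' along the zigzag then yields the conclusion.

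The only real subtlety is matching the definition of ``locally $\leq$-connected'' with the openness argument: a $\leq$-connected neighborhood $N$ of $p$ is not assumed to be open, but by definition of neighborhood it contains an open $V \ni p$, and that suffices because every point of $V$ lies in $N$ and therefore in the $\leq$-component of $p$. The argument does not require $V$ itself to be $\leq$-connected, which is fortunate since $\leq$-connectedness is generally not inherited by open subsets.
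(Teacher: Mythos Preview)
Your argument is correct. The paper itself omits the proof entirely, stating only that the lemma is ``easily proven,'' so there is nothing to compare against; your clopen-subset argument for the first implication and your zigzag-plus-connected-interval argument for the second are exactly the standard way to fill in the details, and the subtlety you flag about neighborhoods versus open sets is handled correctly.
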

In particular, for the case of the topological poset $\mathbb R^n$ we obtain:
\begin{corollary}\label{prop:leq_and_top_comp}
Let $I = U \cap D$ be the intersection of an upset and a downset in $\mathbb R^n$, one of which is open. Then the topological connected components of $I$ and the $\leq$-connected components of $I$ agree. 
\end{corollary}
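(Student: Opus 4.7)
The plan is to apply \cref{lem:leq_con_is_con} to $I$, viewed as a sub-poset of $\mathbb R^n$ equipped with the subspace topology. I would verify both of its hypotheses: that every poset-interval of $I$ is topologically connected (yielding that $\leq$-components refine topological components), and that $I$ is locally $\leq$-connected (yielding the reverse refinement). Combining these gives the equality of components.

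For the first condition, given $p,q \in I$ with $p \leq q$, I would observe that the poset-interval $[p,q]_I = \{r \in I : p \leq r \leq q\}$ equals the closed Euclidean box $B = \prod_i [p_i,q_i] \subset \mathbb R^n$. Indeed, any $r \in B$ satisfies $r \geq p \in U$, so $r \in U$ since $U$ is an upset; similarly $r \leq q \in D$ forces $r \in D$. Since a closed box is convex, it is topologically connected, so the converse part of \cref{lem:leq_con_is_con} applies.

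For local $\leq$-connectedness, I would argue by symmetry and assume that $U$ is the open one (the case of $D$ open is analogous). Given $x \in I$, the open boxes $B \ni x$ with $B \subset U$ form a neighborhood basis of $x$ in $\mathbb R^n$, since $U$ is an open neighborhood of $x$; their intersections with $I$ thus form a neighborhood basis of $x$ in $I$, and moreover $B \cap I = B \cap D$. I then claim each such $B \cap D$ is $\leq$-connected: for any $y \in B \cap D$, the componentwise minimum $m$ of $x$ and $y$ lies in $B$ (each coordinate of $m$ lies between the corresponding coordinates of $x$ and $y$, both interior to the open box) and lies in $D$ (since $m \leq x \in D$ and $D$ is a downset). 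Thus $x \geq m \leq y$ is a zigzag inside $B \cap D$, proving $\leq$-connectedness.

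The only mildly subtle point is this min-construction: one needs the witness $m$ to remain simultaneously inside the open box $B$ and inside $D$, and this is precisely where the openness of $U$ is used — it frees us to shrink to boxes entirely inside $U$ so that the constraint reduces to staying in $D$, which behaves well under taking componentwise minima. Once both hypotheses of \cref{lem:leq_con_is_con} are verified, the two refinements combine to give the desired equality of $\leq$-connected and topologically connected components.
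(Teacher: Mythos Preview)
Your proof is correct and follows essentially the same approach as the paper: both reduce to \cref{lem:leq_con_is_con} and verify local $\leq$-connectedness by exhibiting, inside a small box contained in $U$, a common lower bound in $D$ (you use the componentwise minimum of $x$ and $y$, the paper uses the lower corner $v - \varepsilon(1,\dots,1)$ of an $\varepsilon$-cube). Your write-up is slightly more thorough in that you explicitly verify the second hypothesis of \cref{lem:leq_con_is_con} (that poset-intervals $[p,q]_I$ are Euclidean boxes, hence connected), which the paper leaves implicit.
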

\begin{proof}
We prove the case when $U$ is open. For $v \in  I$, consider a vector $\varepsilon > 0$ such that the $\varepsilon$-cube around $v$ in the maximum norm, $C_{\eps}$, lies in $U$. Then every $u \in C_{\eps} \cap D \subset I$ lies above $v - \varepsilon(1,\dots,1) \in I$. In particular, $C_{\eps} \cap D$ is a $\leq$-connected neighborhood of $v$ in $I$. These sets form a neighborhood basis of $I$, showing that $I$ is locally $\leq$-connected. By \cref{lem:leq_con_is_con}, the statement follows.
\end{proof}

\begin{notation}\label{not:closed_encoding_struct}
 Given an encoding structure $\mathfrak X$ on $\mathbb R^n$, we denote by $\underline{\mathfrak X}$ the subalgebra generated by such upsets in $\mathfrak X$ which are topologically closed.
\end{notation}
The goal is to show that for most scenarios of interest $\underline{\mathfrak{X}}$ is a connective encoding structure. To do so, let us introduce some more notation.
\begin{notation}
     Let $S \subset \mathbb R^n$. We denote by $\underline{S}$ the set of limit points of sequences $(x_k)_{k \in \mathbb N}$, $x_k \to x$, with $x_k \geq x$ and $x_k \in S$. Furthermore, we denote $\widetilde{S}:=(\underline{ (S^c)})^c$. 
\end{notation}
Next, let us list some of the elementary properties of the operation $\underline{(-)}$, which we are going to use to investigate when the closed upsets of an encoding structure again generate an encoding structure.
\begin{lemma}\label{lem:elem_facts_upsetsclosures}
The following properties of the operations $\underline{(-)}$ and $\widetilde{(-)}$ hold:
        \begin{ClosProps}
            \item \label{lem:elem_facts_upsetclosure_1} If $U$ is an upset of $\mathbb R^n$ , then $\underline U =\overline{U}$, the topological closure of $U$, and $\overline{U}$ is again an upset.
            \item \label{lem:elem_facts_upsetclosure_2}If $D$ is a downsets of $\mathbb{R}^n$, then $\widetilde{D} = \mathring{D}$, the topological interior of $D$, and $\mathring{D}$ is again a downset.
            \item \label{lem:elem_facts_upsetclosure_3}If $U,V \subset \mathbb R^n$ are upsets and $V$ is closed, then $\underline{U \cap V^c} = \underline{U} \cap V^c$.
            \item \label{lem:elem_facts_upsetclosure_4}If $U,V \subset \mathbb R^n$ are upsets, then
            $\widetilde{({U \cap V^c})} = U \cap \widetilde{(V^c)}$.
        \end{ClosProps}
\end{lemma}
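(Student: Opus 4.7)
The plan is to prove the four properties in order, with (i) doing most of the work and the remaining parts following either by complementation or by unwinding the definition of $\underline{(-)}$. Throughout, I abbreviate $\mathbf{1} = (1,\dots,1) \in \mathbb{R}^n$ and use the max norm $\|\cdot\|_\infty$.

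For (i), the key observation is a simple shifting trick: given any sequence $y_k \to x$ in $\mathbb{R}^n$, setting $\varepsilon_k = \|y_k - x\|_\infty$ gives $y_k + \varepsilon_k \mathbf{1} \geq x$ componentwise, while the shifted sequence still converges to $x$ and dominates $y_k$. When $U$ is an upset and $y_k \in U$, the shifted sequence lies in $U$ as well. This immediately yields $\overline{U} \subseteq \underline{U}$, and the reverse inclusion is automatic. The same shifting argument shows $\overline{U}$ is an upset: if $y \in \overline{U}$ and $y' \geq y$, translating a $U$-valued sequence converging to $y$ by $y' - y$ produces a $U$-valued sequence converging to $y'$.

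Part (ii) is immediate by complementation. Since $D^c$ is an upset, (i) gives $\underline{D^c} = \overline{D^c}$, and $\widetilde{D} = (\underline{D^c})^c = (\overline{D^c})^c = \mathring{D}$. As the complement of the upset $\overline{D^c}$, the interior $\mathring{D}$ is a downset. Part (iii) is a short direct check. For $\subseteq$, a witnessing sequence $x_k \to x$ for $x \in \underline{U \cap V^c}$ lies in $V^c$ and dominates $x$; if $x$ were in the upset $V$, the domination would force $x_k \in V$, a contradiction. For $\supseteq$, take $x \in \underline{U} \cap V^c$ with witnessing sequence $x_k \to x$, $x_k \geq x$, $x_k \in U$; since $V$ is closed and $x \notin V$, the sequence eventually avoids $V$, yielding a tail witnessing $x \in \underline{U \cap V^c}$.

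Part (iv) is the step requiring the most unwinding and is the main obstacle. Rewriting the two sides using $\widetilde{S} = (\underline{S^c})^c$ and (i) applied to $V$, the identity reduces to
\[
\underline{U^c \cup V} = U^c \cup \overline{V}.
\]
This in turn rests on three observations, each of which is elementary: first, $\underline{(-)}$ commutes with finite unions (pass to a subsequence contained in one summand); second, $\underline{U^c} = U^c$, since a sequence $x_k \geq x$ with $x_k \in U^c$ forces $x \in U^c$ by the upset property of $U$, while the constant sequence handles the reverse inclusion; and third, $\underline{V} = \overline{V}$ directly by (i). Combining these gives the required identity, and hence (iv). The entire argument takes place at the level of $\mathbb{R}^n$ with its componentwise order and requires no constructibility assumption on $U$ or $V$; the only genuinely geometric input is the shifting trick underlying (i).
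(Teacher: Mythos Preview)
Your proof is correct and follows essentially the same route as the paper's: part (i) via a shift that forces the approximating sequence to dominate the limit (you use $y_k + \|y_k - x\|_\infty \mathbf{1}$, the paper uses $\sup(x_k,x)$, which is the same idea), part (ii) by complementation, part (iii) by the downset identity $\underline{V^c}=V^c$ together with openness of $V^c$, and part (iv) by reducing to $\underline{U^c \cup V} = U^c \cup \underline{V}$ via union-compatibility of $\underline{(-)}$ and $\underline{U^c}=U^c$. The only differences are cosmetic.
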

\begin{proof}
    To see that \cref{lem:elem_facts_upsetclosure_1} holds,
    let $(x_k)_{k \in \mathbb N}$ be a sequence in $U$ which converges to $x \in \mathbb R^n$. 
    By replacing $x_k$ with $\sup(x_k,x)$, we may without loss of generality assume that $x_k \geq x$, for all $k \in \mathbb N$. This shows that $\overline{U}$ may indeed be described as in the statement of the lemma. 
    Now, let $x \in \overline{U}$ and $y \in \mathbb R^n$ such that $x \leq y$. Then, for any sequence $(x_k)_{k \in \mathbb N}$ in $U$ converging to $x$ from above, the sequence $(x_k + (y-x))_{k \in \mathbb N}$ also lies in $U$ and converges to $y$, which shows that $\overline{U}$ is indeed an upset. \\
    The second property follows from the first by taking complements.
    For \cref{lem:elem_facts_upsetclosure_3}, note first that as $V^c$ is a downset, we have $\underline {(V^c)} = V^c$. Hence, it follows that 
    \[
    \underline{U \cap V^c} \subset \underline{U} \cap \underline{V^c} = \underline{U} \cap V^c.
    \]
    Since $V^c$ is open, any sequence in $U$ converging to $x \in V^c$ ultimately lies in $V^c$, which shows
    \[ \underline{U} \cap V^c \subset \underline{U \cap V^c} .\]
   Finally, to prove \cref{lem:elem_facts_upsetclosure_4}, we may equivalently show that 
    \[
    \underline {U^c \cup V} = U^c \cup \underline{V}.
    \]
    This is immediate, from the fact $\underline{(-)}$ commutes with unions together with $U^c$ being a downset.
\end{proof}
As an immediate consequence, we obtain:
\begin{lemma}\label{lem:invariance_under_clos_below}
    Let $S=(U_1 \cap V^c_1) \cup \cdots \cup (U_n \cap V^c_n)$ with $U_i,V_i \subset \mathbb R^n$ upsets, which are topologically closed. Then the equalities
    \[
    \underline{S}= S = \widetilde{S} 
    \]
    hold.
\end{lemma}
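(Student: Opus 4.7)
The plan is to establish each equality as a pair of inclusions, with one direction in each case being immediate from the definitions and the other following from \cref{lem:elem_facts_upsetsclosures} combined with elementary additivity and monotonicity properties of the operations $\underline{(-)}$ and $\widetilde{(-)}$. Specifically, taking $x_k = x$ as a constant sequence gives $S \subseteq \underline{S}$ directly from the definition, and dualising (i.e. applying this to $S^c$ and complementing) yields $\widetilde{S} \subseteq S$ without any hypothesis.

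For $\underline{S} \subseteq S$, I would first invoke the fact (already used tacitly in the proof of \cref{lem:elem_facts_upsetsclosures}) that $\underline{(-)}$ commutes with finite unions, reducing the problem to showing $\underline{U_i \cap V_i^c} = U_i \cap V_i^c$ for each $i$. Since $V_i$ is closed, \cref{lem:elem_facts_upsetclosure_3} rewrites the left-hand side as $\underline{U_i} \cap V_i^c$; and since $U_i$ is also closed, \cref{lem:elem_facts_upsetclosure_1} identifies $\underline{U_i}$ with $\overline{U_i} = U_i$. Reassembling the union gives $\underline{S} = S$.

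For $S \subseteq \widetilde{S}$, I would exploit monotonicity: from $A \subseteq B$ one deduces $\widetilde{A} \subseteq \widetilde{B}$, by complementing and applying the obvious monotonicity of $\underline{(-)}$. Hence $\bigcup_i \widetilde{U_i \cap V_i^c} \subseteq \widetilde{S}$, so it suffices to identify each summand with $U_i \cap V_i^c$. Here \cref{lem:elem_facts_upsetclosure_4} gives $\widetilde{U_i \cap V_i^c} = U_i \cap \widetilde{V_i^c}$, and since $V_i^c$ is already open, \cref{lem:elem_facts_upsetclosure_2} yields $\widetilde{V_i^c} = \mathring{V_i^c} = V_i^c$. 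Combining, $\bigcup_i \widetilde{U_i \cap V_i^c} = S$, so $S \subseteq \widetilde{S}$.

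There is no real obstacle here; the lemma is essentially a clean bookkeeping consequence of \cref{lem:elem_facts_upsetsclosures}. The only mildly subtle point is that $\widetilde{(-)}$ is not obviously additive across unions, which is why for the second equality I rely on monotonicity plus the distributivity into intersections afforded by \cref{lem:elem_facts_upsetclosure_4}, rather than attempting to push $\widetilde{(-)}$ inside the union.
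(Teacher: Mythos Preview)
Your proof is correct and follows essentially the same route as the paper: commutativity of $\underline{(-)}$ with unions plus \cref{lem:elem_facts_upsetclosure_1,lem:elem_facts_upsetclosure_3} for the first equality, and monotonicity of $\widetilde{(-)}$ plus \cref{lem:elem_facts_upsetclosure_2,lem:elem_facts_upsetclosure_4} for the second. Your observation that $\widetilde{(-)}$ need not commute with unions, forcing the monotonicity argument, is exactly the point the paper makes when it says ``we may further reduce to the case where $n=1$.''
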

\begin{proof}
    The first equality is immediate from \cref{lem:elem_facts_upsetclosure_1,lem:elem_facts_upsetclosure_3} of \cref{lem:elem_facts_upsetsclosures}, together with commutativity with unions. For the second equality, the nontrivial part is showing that
    $S \subset \widetilde S$. Since $\widetilde{(-)}$ preserves inclusions, it suffices to show
    \[U_i \cap V_i^c \subset \widetilde{(U_i \cap V_i^c)}\]
    and we may further reduce to the case where $n=1$. The latter is immediate from properties \cref{lem:elem_facts_upsetclosure_2,lem:elem_facts_upsetclosure_4} of \cref{lem:elem_facts_upsetsclosures}.
\end{proof}
Furthermore, we are going to make use of the following property of $\underline{(-)}$ and $\widetilde{(-)}$.
\begin{lemma}\label{lem:lower_clos_of_disj}
    Suppose that $S \subset \mathbb R^n$ is such that $\underline S = S = \widetilde S$ and let 
    \[
    S = S_1 \sqcup \cdots \sqcup S_n
    \]
    be a decomposition into sets which is $\leq$-disconnected, i.e. there are no relations $x_i \leq x_j$, for $x_i \in S_j, x_j \in S_j $ and $i \neq j$. Then, for each $j \in \{1,\dots,n\}$, the equalities
    \[
    \underline{S_j} = S_j = \widetilde{S_j}
    \]
    hold.
\end{lemma}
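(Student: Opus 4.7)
The plan is to prove both equalities by exploiting that $\underline{(-)}$ and $\widetilde{(-)}$ each differ from the identity only by a ``one-sided'' operation (taking limit points from above, respectively removing points accessible from above by the complement), and that the $\leq$-disconnectedness hypothesis prevents such one-sided motion from crossing between the pieces $S_j$.

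First I would record two easy general facts that hold for any subset $T \subset \mathbb{R}^n$: using constant sequences, one has $T \subseteq \underline{T}$ and $\widetilde{T} \subseteq T$. Combined with monotonicity of $\underline{(-)}$ and $\widetilde{(-)}$ under inclusion, this already gives $S_j \subseteq \underline{S_j}$ and $\widetilde{S_j} \subseteq S_j$, so it remains to establish the other two inclusions $\underline{S_j} \subseteq S_j$ and $S_j \subseteq \widetilde{S_j}$.

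For $\underline{S_j} \subseteq S_j$, I would pick $x \in \underline{S_j}$ with witnessing sequence $x_k \in S_j$, $x_k \geq x$, $x_k \to x$. Since $S_j \subseteq S$, the hypothesis $\underline{S} = S$ forces $x \in S$, so $x$ lies in some unique component $S_i$. But $x \leq x_k \in S_j$, so $\leq$-disconnectedness forces $i = j$, yielding $x \in S_j$. For $S_j \subseteq \widetilde{S_j}$, I would argue by contradiction: suppose $x \in S_j$ admits a sequence $x_k \in S_j^c$ with $x_k \geq x$ and $x_k \to x$. Since $x \in S = \widetilde{S}$, no sequence in $S^c$ can converge to $x$ from above, so by passing to a subsequence I may assume $x_k \in S \setminus S_j$. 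By the pigeonhole principle, infinitely many $x_k$ lie in a single component $S_i$ with $i \neq j$; extracting a further subsequence gives $x \leq x_k \in S_i$ with $x \in S_j$ and $i \neq j$, contradicting $\leq$-disconnectedness.

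I do not expect a serious obstacle here; the proof is essentially a bookkeeping exercise combining the sequential definition of $\underline{(-)}$ with the disjointness and $\leq$-disconnectedness of the decomposition. The only subtlety is that the two equalities $\underline{S} = S$ and $\widetilde{S} = S$ must both be invoked: the first rules out that a limit point from above of a sequence in $S_j$ could escape $S$, and the second rules out that a sequence from above accumulating at a point of $S_j$ could sit in $S^c$. Once one notices this symmetric use of the two hypotheses, the argument is immediate.
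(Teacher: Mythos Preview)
Your proof is correct and follows essentially the same route as the paper's own argument: use $\underline{S}=S$ to force a limit point from above of $S_j$ to land in $S$, then invoke $\leq$-disconnectedness to pin it to $S_j$; dually, use $\widetilde{S}=S$ to force a sequence in $S_j^c$ converging from above to eventually enter $S\setminus S_j$, and again derive a contradiction from $\leq$-disconnectedness. The only cosmetic difference is that you spell out the trivial inclusions $S_j\subseteq\underline{S_j}$ and $\widetilde{S_j}\subseteq S_j$ beforehand, and your pigeonhole step in the second half is slightly more than necessary (a single $x_k\in S\setminus S_j$ with $x\leq x_k$ already yields the contradiction, no subsequence extraction needed).
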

\begin{proof}
     Suppose that $i$ is such that there exists an $x \in \underline{S_i}$ with $x \notin S_i$. Since $\underline {S_i} \subset  \underline{S} = S$, it follows that $x \in S_j$ for some $j \neq i$. Consquently, there exists a sequence $(x_k)_{k \in \mathbb N}$ in $\underline{S_i}$, with $x_k \geq x$. This stands in contradiction with the incomparability assumption between $S_i$ and $S_j$. Similarly, assume that $x \in S_i$, but $x \notin \widetilde{S_j}$. Then, by definition there exists a sequence $(x_k)_{k \in \mathbb N}$ with $x_k \in (S_i)^c$ and $x_k \geq x$, converging to $x$. However, since $\widetilde{S} = S$ and $x \in S$, $x_k$ has to be contained in $S$ for $k$ sufficiently large. In particular, this implies that at least some $S_j$ with $i \neq j$ contains an $x_k$, for some $k$ sufficiently large. Again, this stands in in contradiction to the $\leq$-disjointness assumption.
\end{proof} 
We may then show the following proposition, which guarantees that the encoding structures we are mainly interested in, such as the PL, semialgebraic and more general o-minimal ones behave well with restricting to closed upsets.
\begin{proposition}\label{prop:closeds_form_enc}
    In the situation of \cref{not:closed_encoding_struct}, suppose that $\mathfrak X$ is closed under taking topological closure of upsets. Then $\underline{\mathfrak X}$ is again an encoding structure. The intervals in $\underline{\mathfrak X}$ are precisely the intervals $I \in \mathfrak X$, for which $\underline I = I = \widetilde I$. In particular, the upsets in $\underline{\mathfrak X}$ are precisely the closed upsets in $\mathfrak X$.
\end{proposition}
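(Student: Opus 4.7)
The plan is to obtain an explicit description of the elements of $\underline{\mathfrak X}$, use it to characterize the intervals via the conditions $\underline I = I = \widetilde I$, and then read off the three encoding-structure axioms together with the statement about upsets.

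Because $\mathfrak X$ is assumed closed under taking topological closures of upsets, the family $\mathcal U$ of closed upsets in $\mathfrak X$ is stable under finite intersections (intersections of closed sets are closed and intersections of upsets are upsets) and, by the same reasoning, under finite unions. A standard disjunctive-normal-form argument then shows that the Boolean algebra $\underline{\mathfrak X}$ generated by $\mathcal U$ consists precisely of finite unions $S = \bigcup_i (U_i \cap V_i^c)$ with $U_i, V_i \in \mathcal U$. By \cref{lem:invariance_under_clos_below}, any such $S$ satisfies $\underline S = S = \widetilde S$. In particular, any interval $I \in \underline{\mathfrak X}$ lies in $\mathfrak X$ and satisfies $\underline I = I = \widetilde I$, giving one direction of the interval characterization.

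For the converse, let $I \in \mathfrak X$ be an interval with $\underline I = I = \widetilde I$. By the second axiom of the encoding structure $\mathfrak X$, write $I = U \cap V^c$ with $U, V \in \mathfrak X$ upsets. Applying \cref{lem:elem_facts_upsetclosure_4} of \cref{lem:elem_facts_upsetsclosures} yields $\widetilde I = U \cap (\overline V)^c$, and the hypothesis $\widetilde I = I$ gives $I = U \cap (\overline V)^c$. Since $\overline V$ is a closed upset, \cref{lem:elem_facts_upsetclosure_3} of the same lemma now applies to produce
\[
\underline{\bigl(U \cap (\overline V)^c\bigr)} = \overline U \cap (\overline V)^c,
\]
where I have also used \cref{lem:elem_facts_upsetclosure_1} to identify $\underline U$ with $\overline U$. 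Combining with $\underline I = I$, we conclude $I = \overline U \cap (\overline V)^c$. Since the closure hypothesis on $\mathfrak X$ gives $\overline U, \overline V \in \mathfrak X$, this exhibits $I$ as a member of $\underline{\mathfrak X}$ and completes the characterization.

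With the characterization in hand, the encoding-structure axioms are immediate. The algebra axiom holds by construction. The second axiom is witnessed by the representation $I = \overline U \cap (\overline V)^c$ just obtained, since $\overline U, \overline V \in \underline{\mathfrak X}$. The third axiom follows from the disjunctive description: each summand $U_i \cap V_i^c$ is an interval satisfying $\underline{(\cdot)} = \cdot = \widetilde{(\cdot)}$ by \cref{lem:invariance_under_clos_below}, hence lies in $\underline{\mathfrak X}$ by the characterization. The statement about upsets then follows: an upset $W \in \underline{\mathfrak X}$ is in particular an interval of $\underline{\mathfrak X}$, so $W = \underline W = \overline W$ by \cref{lem:elem_facts_upsetclosure_1}, showing $W$ is a closed upset of $\mathfrak X$, and conversely every closed upset of $\mathfrak X$ belongs to $\underline{\mathfrak X}$ by construction. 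The main delicate point is the converse direction of the interval characterization: one has to invoke $\widetilde I = I$ before $\underline I = I$ so as to replace $V^c$ by the open set $(\overline V)^c$, since \cref{lem:elem_facts_upsetclosure_3} cannot be applied to $U \cap V^c$ directly when $V$ is not already closed.
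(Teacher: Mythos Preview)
Your proof is correct and follows the same strategy as the paper's: both use the disjunctive-normal-form description of $\underline{\mathfrak X}$ together with \cref{lem:invariance_under_clos_below}, and for the key step write an interval as $U \cap V^c$, first apply $\widetilde I = I$ via \cref{lem:elem_facts_upsetclosure_4} to replace $V$ by $\overline V$, then $\underline I = I$ via \cref{lem:elem_facts_upsetclosure_3} to replace $U$ by $\overline U$. Your presentation is simply organized a bit more explicitly, first establishing both directions of the interval characterization and then checking each axiom, whereas the paper focuses on axiom~(2) as the ``only involved part'' and leaves the remaining verifications implicit.
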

\begin{proof}
    The only involved part of the proof is showing that any interval
    \[
    I = (U_1 \cap V^c_1) \cup \cdots \cup (U_n \cap V^c_n),
    \]
    with $U_i,V_i \in \mathfrak X$ closed upsets of $\mathbb R^n$, may again be written in the form $U' \cap V'^c$, with $U',V' \in \mathfrak X$ closed upsets.
    Since $\mathfrak X$ is an encoding structure, we may write
    \[
    I= U \cap V^c,
    \]
    for upsets $U,V \in \mathfrak X$. We claim that 
    \[
    I = \overline{U} \cap \overline{V}^c,
    \]
    holds, which finishes the first part of the proof, by the assumption that $\mathfrak X$ is closed under taking topological closures of upsets. Now, to see that $I = \overline{U} \cap \overline{V}^c$, note first that there are  inclusions
    \[
    \widetilde{I} \subset U \cap \widetilde{(V^c)} \subset I.
    \]
    By \cref{lem:invariance_under_clos_below} we have 
    $
    \widetilde{I} = I .
    $
    and hence 
    \[
    I = U \cap \widetilde{(V^c)}.
    \]
    Consequently, we may assume without loss of generality that $V = \underline{V}$, i.e. by \cref{lem:elem_facts_upsetclosure_1} of \cref{lem:elem_facts_upsetsclosures}, that $V$ is closed. Thus, we may now apply \cref{lem:elem_facts_upsetclosure_3} of \cref{lem:elem_facts_upsetsclosures} together with \cref{lem:invariance_under_clos_below}, to obtain
    \[
    I = \underline{I} = \underline{U} \cap V^c,
    \]
    as was to be shown. Note, that the only two properties used to write $I$ in the form $I = \overline{U} \cap \overline{V}^c$ were that $\underline I = I = \widetilde{I}$. This yields the characterization of intervals in $\underline {\mathfrak X}$ in the statement of the proposition.
\end{proof}
We may now combine \cref{prop:closeds_form_enc} with  \cref{prop:leq_and_top_comp}  and \cref{lem:lower_clos_of_disj} to show the following result:
\begin{corollary}\label{cor:low_cl_conn}
If $\mathfrak X$ is any encoding structure on $\mathbb R^n$, which is closed under taking closures of upsets, and is such that any of its intervals has only finitely many topological components and these are again in $\mathfrak X$, then $\underline{\mathfrak X}$ is a connective encoding structure. 
\end{corollary}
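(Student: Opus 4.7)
My plan is to unwind the definition of connectivity using the machinery already developed in the excerpt. Let $I$ be an arbitrary interval in $\underline{\mathfrak X}$. By \cref{prop:closeds_form_enc}, $I$ belongs to $\mathfrak X$ and satisfies $\underline I = I = \widetilde I$, and moreover can be written as $I = U \cap V^c$ with $U, V \in \mathfrak X$ closed upsets. I need to show two things: that $I$ has only finitely many $\leq$-connected components, and that each such component belongs to $\underline{\mathfrak X}$.

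For the first point, by the hypothesis on $\mathfrak X$, the interval $I$ has only finitely many topological connected components $I_1, \dots, I_m$, each of which lies in $\mathfrak X$. Since $V$ is closed, $V^c$ is open, so \cref{prop:leq_and_top_comp} applies and tells me that the topological components of $I = U \cap V^c$ coincide with its $\leq$-connected components. Hence there are only finitely many $\leq$-components, and they are exactly $I_1, \dots, I_m$. Each $I_j$ is an interval of $\mathbb R^n$ by \cref{lem:comp_of_int}, and lies in $\mathfrak X$ by hypothesis.

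For the second point, I want to upgrade each $I_j$ from membership in $\mathfrak X$ to membership in $\underline{\mathfrak X}$ using the characterization from \cref{prop:closeds_form_enc}: namely, that an interval of $\mathfrak X$ lies in $\underline{\mathfrak X}$ if and only if it is invariant under both $\underline{(-)}$ and $\widetilde{(-)}$. Here is where \cref{lem:lower_clos_of_disj} does exactly the work I need: the decomposition $I = I_1 \sqcup \cdots \sqcup I_m$ into distinct $\leq$-components is $\leq$-disconnected by construction, and $I$ itself satisfies $\underline I = I = \widetilde I$, so the lemma yields $\underline{I_j} = I_j = \widetilde{I_j}$ for each $j$. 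Hence $I_j \in \underline{\mathfrak X}$ for all $j$, which is precisely what connectivity of $\underline{\mathfrak X}$ demands.

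I do not expect any serious obstacle; the delicate analytical and order-theoretic work has already been carried out in \cref{prop:closeds_form_enc,prop:leq_and_top_comp,lem:lower_clos_of_disj}, and the proof is essentially a matter of assembling these three statements in the correct order. The only subtlety to watch is ensuring the decomposition into topological components really is $\leq$-disconnected before invoking \cref{lem:lower_clos_of_disj}, but this is guaranteed by \cref{prop:leq_and_top_comp} via the openness of $V^c$.
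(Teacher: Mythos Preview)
Your proof is correct and follows essentially the same route as the paper: invoke \cref{prop:closeds_form_enc} to know $\underline{\mathfrak X}$ is an encoding structure and to write $I = U \cap V^c$ with $V$ closed, use \cref{prop:leq_and_top_comp} (via the openness of $V^c$) to identify topological with $\leq$-components, note these are intervals by \cref{lem:comp_of_int}, and then apply \cref{lem:lower_clos_of_disj} together with the interval characterization in \cref{prop:closeds_form_enc} to land each component in $\underline{\mathfrak X}$. The only cosmetic omission is that you do not state explicitly that $\underline{\mathfrak X}$ is an encoding structure before checking connectivity, but this is the first assertion of \cref{prop:closeds_form_enc}, which you already cite.
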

\begin{proof}
    By \cref{prop:closeds_form_enc}, $\underline{\mathfrak X}$ does indeed form an encoding structure.
    Now, if $I \in \underline{\mathfrak X}$ is an interval, then by assumption we may write $I$ as a topologically disjoint union $I = I_1 \sqcup \dots \sqcup I_n$ with $I_j$ elements of $\mathfrak X$, which are topologically connected.
    Since, by \cref{prop:closeds_form_enc}, $I$ is the intersection of a closed upset with an open downset, \cref{prop:leq_and_top_comp} implies that $I$ is locally $\leq$-connected and hence the $I_j$ are also the $\leq$-connected components of $I$. In particular, \cref{lem:comp_of_int}, the $I_j$ are again intervals.
    It remains to show that they are indeed elements of $\underline {\mathfrak X}$. This now follows by the characterization of intervals of \cref{prop:closeds_form_enc} together with \cref{lem:lower_clos_of_disj}.
\end{proof} 
\begin{remark}
     In particular, the assumptions of \cref{cor:low_cl_conn} are fulfilled, when $\mathfrak X$ is given by the set of finite unions of PL or semialgebraic intervals (or alternatively any encoding structure derived from an o-minimal structure in the sense of \cite{van1998tame}). Indeed, in these scenarios the number of topological components of each interval is finite and they are again of the respective class (see \cite[Prop. 2.18]{van1998tame}). Furthermore, since any interval $[a,b] \subset \mathbb R^n$ is connected, it follows that the topological components of any interval are themselves intervals.
\end{remark}
Now, \cref{thm:special_case_of_main_result} is simply the combination of \cref{cor:low_cl_conn} together with \cref{thm:X_conn_abelian}.
\begin{remark}\label{rem:alternative_proof}
    Note first that instead of working with closed sets, in the definition of $\underline{\mathfrak X}$, one may just as well work with open ones, and obtains a corresponding version of \cref{cor:low_cl_conn}.\\ One may use this, to rephrase the results of this paper in terms of alternative descriptions of persistence modules, and their categories of observables.
    For the remainder of this remark, fix some encoding structure $\mathfrak X$ on $\mathbb R^n$ which is closed under taking interiors of upsets, and denote by $\utilde{\mathfrak X}$ the encoding structure generated by the open upsets of $\mathfrak X$.
    While there are some details to be verified, conjecturally the following relationship between the observable perspective introduced in \cite{Berkouk_2021} and sheaf theoretic models for persistence modules (see \cite{KashiwaraShapPers}) should hold. The subanalytic case is discussed in \cite{millerConstructible}.
    \begin{enumerate}
        \item Only allowing for open subsets in $\mathfrak X$ essentially amounts to passing to a specific subcategory of $\gamma$-sheaves, as defined in \cite{KashiwaraShapPers}. Namely, to those $\gamma$-sheaves which are constructible with respect to a finite stratification of $\mathbb R^n$ by elements of $\mathfrak X$.
        \item Consequently, under the equivalence between $\gamma$-sheaves and the observable category of \cite{Berkouk_2021}, $\permodx[\mathbb R^n][\utilde{\mathfrak X}]$ should be equivalent to the full subcategory of the observable category of persistence modules which are $\mathfrak X$-encodable, i.e. isomorphic to an object in $\permodx[\mathbb R^n]$ in the observable category.
        \item Finally, in the language of sheaves the category $\permodx[\mathbb R^n][\utilde{\mathfrak X}]$ should correspond to the category of sheaves which are constructible with respect to a finite stratification by elements of $\mathfrak X$, and have microsupport in the negative polar cone $\gamma^{o,a}$, where $\gamma$ denotes the positive cone $\mathbb R_{\geq 0}^n \subset \mathbb R^n$ (following the notation of \cite{KashiwaraShapPers}).
    \end{enumerate}
    Hence, an alternative proof of \cref{thm:special_case_of_main_result} should follow by verifying the abelianity of the final category in this list.
\end{remark}

\section*{Acknowledgments}
I would like to thank Ezra Miller and Barbara Giunti for helpful discussions and feedback, as well as the \textit{Landesgraduiertenförderung Baden-Württemberg} for their financial support.
\newpage
\small
\bibliographystyle{alpha}
\bibliography{references}
\end{document}